\documentclass[11pt]{article}
\usepackage{amsthm, amsmath, amssymb, amsfonts, url, booktabs, tikz, setspace, fancyhdr, bm}
\usepackage{hyperref}
\usepackage{amsthm}
\usepackage{geometry}
\geometry{verbose,tmargin=2.1cm,bmargin=2.1cm,lmargin=2.4cm,rmargin=2.4cm}
\usepackage{hyperref, enumerate}
\usepackage[shortlabels]{enumitem}
\usepackage[babel]{microtype}
\usepackage[english]{babel}
\usepackage[capitalise]{cleveref}
\usepackage{comment}
\usepackage{bbm}
\usepackage{csquotes}
\usepackage{mathabx}
\usepackage{tikz}
\usepackage{graphicx}
\usepackage{float}
\usepackage{xcolor}

\counterwithin{figure}{section}


\newtheorem{theorem}{Theorem}[section]

\newtheorem{lemma}[theorem]{Lemma}

\theoremstyle{definition}

\newtheorem*{defn-non}{Definition}


\newlist{Case}{enumerate}{2}
\setlist[Case, 1]{%
    label           =   {\bfseries Case \arabic*.},
    labelindent=1em ,labelwidth=1.3cm, labelsep*=1em, leftmargin =!
}
\setlist[Case, 2]{%
    label           =   {\bfseries Subcase \arabic{Casei}.\arabic*.},
    labelindent=-1em ,labelwidth=1.3cm, labelsep*=1em, leftmargin =!
}

\newcommand{\ceil}[1]{\lceil #1\rceil}

\usepackage{todonotes}



\title{All rectangles exhibit canonical Ramsey property}

\author{
Gennian Ge\thanks{School of Mathematical Sciences, Capital Normal University, Beijing 100048, China. Email: gnge@zju.edu.cn. Gennian Ge is supported by the National Key Research and Development Program of China under Grant 2020YFA0712100, the National Natural Science Foundation of China under Grant 12231014, and Beijing Scholars Program.}
\and 
Yang Shu\thanks{School of Mathematical Sciences, University of Science and Technology of China, Hefei, Anhui 230026, China. Email: shuyangyyyy@mail.ustc.edu.cn}
\and
Zixiang Xu\thanks{Extremal Combinatorics and Probability Group (ECOPRO), Institute for Basic Science (IBS), Daejeon 34126, South Korea. Email: zixiangxu@ibs.re.kr. Supported by IBS-R029-C4.}
}

\date{}

\begin{document}
\maketitle

\begin{abstract}

In a seminal work, Cheng and Xu proved that for any positive integer \(r\), there exists an integer \(n_0\), independent of \(r\), such that every \(r\)-coloring of the \(n\)-dimensional Euclidean space \(\mathbb{E}^n\) with \(n \ge n_0\) contains either a monochromatic or a rainbow congruent copy of a square. This phenomenon of dimension-independence was later formalized as the canonical Ramsey property by Gehe\'{e}r, Sagdeev, and T\'{o}th, who extended the result to all hypercubes, and to rectangles whose side lengths \(a\), \(b\) satisfy \((\frac{a}{b})^2\) is rational. They further posed the natural problem of whether every rectangle admits the canonical Ramsey property, regardless of the aspect ratio.

In this paper, we show that all rectangles exhibit the canonical Ramsey property, thereby completely resolving this open problem of Gehe\'{e}r, Sagdeev, and T\'{o}th. Our proof introduces a new structural reduction that identifies product configurations with bounded color complexity, enabling the application of simplex Ramsey theorems and product Ramsey amplification to control arbitrary aspect ratios.
\end{abstract}

\section{Introduction}

Euclidean Ramsey theory, introduced by Erd\H{o}s, Graham, Montgomery, Rothschild, Spencer, and Straus~\cite{1973JCTA} in 1975, investigates the inevitable appearance of geometric patterns in Euclidean space under arbitrary colorings. Originating from classical Ramsey theory and graph theory, it explores the interplay between combinatorial and geometric structures in continuous domains. Over the past fifty years, numerous profound results have been established, exemplified by notable works~\cite{2019DCGCONLON,conlon2022more,2024Yip,2026JCTA,1990JAMS,2004FranklRodl,2025EUJC,1980GrahamJCTA,2009Combinatorica,SHADER1976385} and the references therein. We also refer the readers to the excellent textbook~\cite{1997HANDBOOK} for a comprehensive overview.

In parallel, Gallai~\cite{1967Gallai} initiated the study of the Gallai-Ramsey problem, which focuses on the existence of monochromatic or rainbow substructures in edge-colored complete graphs and more general colored systems. The Gallai-Ramsey problem has since evolved into a rich and active area of research, with significant progress on extremal bounds and structural characterizations~\cite{2020SIDMABalogh,2010JGTGyarfas,2020JGTLiu}.

Recently, Mao, Ozeki, and Wang~\cite{2022arxivEGR} introduced the \emph{Euclidean Gallai-Ramsey problem}, which combines ideas from Euclidean Ramsey theory and Gallai-Ramsey theory. Given two finite geometric configurations \(K_1\) and \(K_2\), we write
\[
\mathbb{E}^n \overset{r}{\rightarrow} (K_1; K_2)_{\mathrm{GR}}
\]
to denote the following statement: for every coloring \(\chi: \mathbb{E}^n \to [r]\), there exists either a monochromatic copy of \(K_1\) or a rainbow copy of \(K_2\). 

Cheng and Xu~\cite{2025DCGChengXu} studied this problem for various fundamental configurations, with particular focus on triangles, squares, and lines. They discovered that many configurations exhibit a remarkable \emph{dimension-independence phenomenon}: the existence of monochromatic or rainbow copies is guaranteed once the dimension exceeds a threshold that depends only on the configuration, not on the number of colors. For example, they proved that for any right triangle, acute triangle, or square \(K\), there exists an integer \(n_0\), independent of \(r\), such that \(\mathbb{E}^n \overset{r}{\rightarrow} (K; K)_{\mathrm{GR}}\) holds for all \(n \ge n_0\).

To formalize this dimension-independence phenomenon, Gehe\'{e}r, Sagdeev, and T\'{o}th~\cite{2024Cano} introduced the notion of the \emph{canonical Ramsey property} in Euclidean settings. Formally, a finite configuration \(X \) is said to have the canonical Ramsey property if there exists an integer \(n_0 = n_0(X)\) such that for any positive integer \(r\),
\[
\mathbb{E}^{n_0} \overset{r}{\rightarrow} (X; X)_{\mathrm{GR}}.
\]
They further extended the canonical results to all hypercubes, and to rectangles whose side lengths \(a\) and \(b\) satisfy \((\frac{a}{b})^2 \) is rational. They posed the natural question of whether this property holds for all rectangles.

In this paper, we resolve this conjecture affirmatively.

\begin{theorem}\label{thm:rectangle}
Let \(r\) be a positive integer, and let \(T\) be a rectangle. There exists an integer \(n_0 = n_0(T)\) such that for all \(n \ge n_0\),
\[
    \mathbb{E}^{n} \overset{r}{\rightarrow} (T; T)_{\mathrm{GR}}.
\]
\end{theorem}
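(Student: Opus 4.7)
The case where $(a/b)^2$ is rational is already handled by Gehe\'{e}r, Sagdeev, and T\'{o}th~\cite{2024Cano}, so we focus on $(a/b)^2 \notin \mathbb{Q}$, where the rectangle cannot be embedded as a face of a single Euclidean hypercube. The plan is a structural reduction to a product configuration, followed by iterated canonical Ramsey arguments with a product amplification step to handle arbitrary aspect ratios. Concretely, in $\mathbb{E}^n$ fix a regular simplex $\Delta$ of edge length $b$ with $N$ vertices in some affine subspace, and choose $L$ mutually orthogonal unit vectors $v_1, \ldots, v_L$ perpendicular to this subspace. Let $H_a = \{\sum_{j \in S} av_j : S \subseteq [L]\}$, a hypercube of side $a$ in the orthogonal complement. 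In the Minkowski-sum configuration $\Delta + H_a$, every pair of distinct $u_1, u_2 \in \Delta$ (at distance $b$) together with every edge-adjacent pair $h, h' \in H_a$ (at distance $a$) produces four points that form a rectangle congruent to $T$.

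Given an $r$-coloring $\chi$, define the super-coloring $\sigma \colon H_a \to [r]^{\Delta}$ by $\sigma(h)(u) = \chi(u+h)$. The canonical Ramsey property for hypercubes~\cite{2024Cano} supplies, for $L$ depending on $N$ but not on $r$, a sub-hypercube $H_a' \subseteq H_a$ on which $\sigma$ is monochromatic or rainbow, even though the super-color palette has size $r^N$. We next establish a companion canonical Ramsey statement for the regular $b$-simplex with dimension independent of $r$, by pairing the Frankl-R\"odl simplex Ramsey theorem~\cite{2004FranklRodl} with a rainbow-alternative argument; this lets us refine $\Delta$ to a sub-simplex $\Delta'$ on which the residual coloring exhibits the same dichotomy. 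In the favorable branches we are done: a double-monochromatic outcome with matching colors yields a monochromatic $T$, while a double-rainbow outcome produces a rainbow $T$ through an Erd\H{o}s-Ko-Rado style disjoint-pair analysis on the induced color pairs $\{\chi(u+h), \chi(u+h')\}$, provided $N$ exceeds an absolute constant.

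The principal obstacle, and the step that demands the most care, is the mixed branches: a monochromatic output on one level together with a rainbow on the other, or two monochromatic outputs with distinct colors. On the configuration $\Delta + H_a$ alone these branches only guarantee rectangles using exactly two colors, which are neither monochromatic nor rainbow. The plan is to resolve them via a product Ramsey amplification: augment the ambient product by an additional orthogonal hypercube factor of side $a$, promote each ``bad'' two-color pattern to a super-coloring of the new factor, and iterate the two-level Ramsey argument on a product configuration whose color complexity has been bounded by the previous step. The delicate point is verifying that each iteration incurs only a dimension increment bounded by a function of $N$, $L$, and $T$, independent of $r$; this relies crucially on the dimension-independent nature of the hypercube canonical Ramsey property and its compatibility with simplex refinements. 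Choosing $N$ and $L$ as absolute constants depending only on $T$ and iterating a bounded number of times then yields the required $n_0 = n_0(T)$.
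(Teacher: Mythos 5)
Your proposal is a plan rather than a proof, and the step you yourself flag as ``the principal obstacle'' --- the mixed branches --- is exactly where it breaks down, for a reason you do not resolve. If you promote each ``bad'' two-color pattern to a super-color on a new hypercube factor, the auxiliary palette has size on the order of $\binom{r}{2}$, which grows with $r$; iterating then forces the dimension to depend on $r$, defeating the whole point. The paper's way around this is the one idea your sketch is missing: instead of recording \emph{which colors} appear, record the \emph{position} of a monochromatic segment inside a fixed finite configuration. Concretely, the paper proves a lemma stating that every $r$-coloring ($r\ge 4$) of $S_{3s+1}(a)\times B_s(a,b)$ contains a monochromatic $\ell_a$ or a rainbow $T$, where $B_s(a,b)$ is a planar path of $s$ edges of length $b$ whose endpoints are distance $a$ apart (this path gadget, not a hypercube, is what handles irrational aspect ratios --- one only needs $s\ge\lceil a/b\rceil$). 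In the non-rainbow case each translate of this configuration carries a monochromatic $\ell_x$ in one of at most $q=(m+1)\binom{3m+1}{2}+3m+1$ positions, a quantity depending only on $T$; coloring the base space by that position and applying Frankl--R\"odl plus the product Ramsey theorem with $q$ colors keeps everything $r$-independent. A second application of the same lemma with the roles of $x$ and $y$ swapped then closes the rectangle.

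Two further concrete problems with your route. First, the hypercube canonical Ramsey theorem gives a monochromatic or rainbow congruent copy of a hypercube in a sufficiently high-dimensional space; it does not let you extract a monochromatic-or-rainbow sub-hypercube $H_a'$ from an arbitrary super-coloring of a single finite hypercube $H_a$, so that step as stated is a misapplication. Second, even granting a rainbow outcome for the super-coloring $\sigma$, ``the functions $u\mapsto\chi(u+h)$ are pairwise distinct'' only says each pair differs somewhere on $\Delta$; it does not locate a single quadruple $u_1+h$, $u_1+h'$, $u_2+h$, $u_2+h'$ receiving four distinct $\chi$-colors, and the ``Erd\H{o}s--Ko--Rado style'' analysis meant to supply this is not carried out. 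The paper's Lemma~\ref{lemma:aux1} does the analogous extraction explicitly via pigeonhole on the path indices and the fact that a monochromatic-$\ell_a$-free coloring forces a $4$-point regular $a$-simplex to be rainbow. As written, your argument establishes the easy branches and defers the hard ones, so it does not constitute a proof.
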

While Theorem~\ref{thm:rectangle} extends previous results on squares and rational rectangles, our proof departs from prior methods. The key idea is to identify an auxiliary product structure within Euclidean space whose coloring involves only a bounded number of colors. This structural reduction enables the application of the celebrated simplex Ramsey theorem of Frankl and R\"{o}dl~\cite{1990JAMS} to control local configurations, and subsequently leverages product Ramsey properties~\cite{1973JCTA} to amplify these local patterns into a global monochromatic rectangle.

\section{The Proof}
We begin by introducing the notations used throughout the proof.

\begin{itemize}
    \item \( S_n(x) \): the point set of a regular \((n{-}1)\)-dimensional simplex of side length \(x\); that is, a set of \(n\) points \(\boldsymbol{x}_1, \dots, \boldsymbol{x}_n\) such that the Euclidean distance \(\|\boldsymbol{x}_i - \boldsymbol{x}_j\| = x\) for all \(i \neq j\).

    \item \( A \times B \): the Cartesian product of sets \(A\) and \(B\), defined as \(\{(\boldsymbol{a}, \boldsymbol{b}) : \boldsymbol{a} \in A,\, \boldsymbol{b} \in B\}\).

    \item \( \ell_x \): a two-point configuration consisting of a line segment of length \(x\).

    \item \( B_t(x, y) \): a planar configuration of \(t + 1\) points \(\boldsymbol{v}_1, \boldsymbol{v}_2, \ldots, \boldsymbol{v}_{t+1}\) lying in a common plane, such that
    \[
        \|\boldsymbol{v}_i - \boldsymbol{v}_{i+1}\| = y \quad \text{for all } 1 \le i \le t, \quad \text{and} \quad \|\boldsymbol{v}_1 - \boldsymbol{v}_{t+1}\| = x.
    \]
    That is, a polygonal path of \(a\) consecutive edges of length \(y\), whose endpoints are distance \(x\) apart. Here, it requires that \(t\ge\max\{2,\ceil{\frac{x}{y}}\}\). In particular, \(B_2(x, y)\) forms a triangle.

    \item For \(X \subseteq Y\), we write \(Y \overset{r}{\rightarrow} X\) if every \(r\)-coloring of \(Y\) contains a monochromatic configuration congruent to \(X\). In particular, a configuration \(X \subseteq \mathbb{E}^n\) is called \emph{Ramsey} if for all \(r\), \(\mathbb{E}^n \overset{r}{\rightarrow} X\) holds for some \(n = n(X, r)\).
    \item For \(K_{1},K_{2} \subseteq Y\), we write \(Y \overset{r}{\rightarrow} (K_1; K_2)_{\mathrm{GR}}\) if every \(r\)-coloring of \(Y\) contains a monochromatic copy of \(K_1\) or a rainbow congruent copy of \(K_2\).
\end{itemize}

We will use the following classical results.

\begin{theorem}[\cite{1990JAMS}]\label{thm:RodlFrankl}
Every simplex is Ramsey.
\end{theorem}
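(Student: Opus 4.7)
The plan is to derive the theorem as a clean consequence of the Hales--Jewett theorem via a product embedding of the simplex into a higher-dimensional Euclidean space. Fix a simplex $\sigma = \{v_0, v_1, \ldots, v_k\} \subset \mathbb{E}^k$ and a number of colors $r$.

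First I would form the product configuration $Y := \sigma^N \subset \mathbb{E}^{kN}$, whose elements are $N$-tuples $(w_1, \ldots, w_N)$ with each $w_i \in \sigma$. The key geometric observation is that a \emph{combinatorial line} in $Y$---a set of $k+1$ points obtained by fixing all coordinate blocks except one and letting that block range over the vertices of $\sigma$---is automatically a congruent copy of $\sigma$ inside $\mathbb{E}^{kN}$, because only a single block changes and the ambient Euclidean distance between two such points equals $\|v_j - v_{j'}\|$ exactly. So monochromatic combinatorial lines in $Y$ produce monochromatic congruent copies of $\sigma$.

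Second I would apply the Hales--Jewett theorem: identifying $\sigma$ with the alphabet $[k+1]$, the finite product $Y$ corresponds bijectively with $[k+1]^N$, and for $N \ge \mathrm{HJ}(k+1, r)$ sufficiently large, every $r$-coloring of $[k+1]^N$ contains a monochromatic combinatorial line. Restricting any given $r$-coloring of $\mathbb{E}^{kN}$ to the finite set $Y$ therefore yields such a monochromatic combinatorial line, which by the previous step is a monochromatic congruent copy of $\sigma$. Setting $n_0 := kN$ completes the argument.

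The main obstacle, modest in this approach, lies in verifying the geometric claim that combinatorial lines in $\sigma^N$ truly realize isometric copies of $\sigma$; this is a direct calculation once one interprets the Cartesian product correctly as sitting inside the orthogonal decomposition $\mathbb{E}^{kN} = \mathbb{E}^k \oplus \cdots \oplus \mathbb{E}^k$, so that changing one coordinate block preserves the original pairwise distances of $\sigma$. The deeper content of Frankl and R\"odl's original argument lies in providing much sharper quantitative dimension bounds through spherical embeddings and a careful counting argument, but for the qualitative Ramsey statement invoked in the present paper, the Hales--Jewett reduction described above already suffices.
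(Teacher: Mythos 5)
There is a genuine gap, and it is the central one. The paper does not prove this statement at all; it imports it as a black box from Frankl and R\"odl's 1990 paper, and your proposed Hales--Jewett reduction does not actually establish it. The flaw is in your description of a combinatorial line. The Hales--Jewett theorem guarantees a monochromatic combinatorial line whose \emph{wildcard set} is an arbitrary nonempty subset $S \subseteq [N]$ of the coordinate blocks: all blocks in $S$ vary \emph{simultaneously} through the alphabet, while the blocks outside $S$ are fixed. You only treated the case $\lvert S\rvert = 1$ (``fixing all coordinate blocks except one''), which Hales--Jewett does not let you force. If $\lvert S\rvert = s$, then the squared distance between the line's points labelled by $v_j$ and $v_{j'}$ is $\sum_{i\in S}\lVert v_j - v_{j'}\rVert^2 = s\,\lVert v_j - v_{j'}\rVert^2$, so the monochromatic configuration you obtain is congruent to $\sqrt{s}\,\sigma$ for some unknown $s\ge 1$, i.e.\ a \emph{dilated} copy of the simplex rather than a congruent one. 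Since Euclidean Ramsey statements concern congruent copies and are not scale-invariant, this does not prove the theorem; it only shows that some dilate of $\sigma$ appears monochromatically, which is a far weaker (and classical) fact.

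This obstruction is exactly why the simplex Ramsey theorem resisted proof for fifteen years after Erd\H{o}s, Graham, Montgomery, Rothschild, Spencer, and Straus posed it: no Hales--Jewett or Gallai-type product argument controls the dilation factor. Frankl and R\"odl's actual proof is not ``a sharper quantitative version'' of your reduction but a genuinely different and substantially harder argument built on their partite amalgamation machinery and a density theorem for spherical configurations. For the purposes of the present paper the correct move is simply to cite the result, as the authors do; if you want a proof you would need to reproduce the Frankl--R\"odl argument, not the Hales--Jewett shortcut.
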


\begin{theorem}[\cite{1973JCTA}]\label{EGM}
If \(X\) and \(Y\) are Ramsey, then so is \(X \times Y\).
\end{theorem}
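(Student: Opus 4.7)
The plan is the classical product-Ramsey ``fingerprint'' argument: reduce the Ramsey property of \(X\) to a fixed finite test set in the first factor, then use the Ramsey property of \(Y\) with a blown-up palette to force every test point's color pattern to coincide along a congruent copy of \(Y\). Concretely, fix the number of colors \(r\) and choose \(n_1\) with \(\mathbb{E}^{n_1}\overset{r}{\rightarrow}X\). By a standard compactness argument (Tychonoff on \(\{1,\dots,r\}^{\mathbb{E}^{n_1}}\), or equivalently a de Bruijn--Erd\H{o}s--style extraction) one may replace \(\mathbb{E}^{n_1}\) by a finite witness \(S_X\subseteq\mathbb{E}^{n_1}\) satisfying \(S_X\overset{r}{\rightarrow}X\); write \(M=\abs{S_X}\).

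Next, apply the hypothesis that \(Y\) is Ramsey with an inflated palette: pick \(n_2\) such that \(\mathbb{E}^{n_2}\overset{r^M}{\rightarrow}Y\), set \(N=n_1+n_2\), and identify \(\mathbb{E}^N=\mathbb{E}^{n_1}\times\mathbb{E}^{n_2}\). Given an arbitrary \(r\)-coloring \(\chi:\mathbb{E}^N\to[r]\), assign to each \(\boldsymbol{y}\in\mathbb{E}^{n_2}\) a fingerprint \(c(\boldsymbol{y})\in[r]^{S_X}\) by \(c(\boldsymbol{y})(\boldsymbol{s})=\chi(\boldsymbol{s},\boldsymbol{y})\); there are only \(r^M\) possible fingerprints, so this is an \(r^M\)-coloring of \(\mathbb{E}^{n_2}\). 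By the choice of \(n_2\), it admits a congruent copy \(Y'\subseteq\mathbb{E}^{n_2}\) of \(Y\) on which \(c\) is constant, with common value \(\varphi:S_X\to[r]\). By the choice of \(S_X\), \(\varphi\) then admits a monochromatic congruent copy \(X'\subseteq S_X\) of \(X\). Writing \(X'=T_X(X)\) and \(Y'=T_Y(Y)\) for isometries \(T_X,T_Y\), the product map \(T_X\times T_Y\) is an isometry of \(\mathbb{E}^N\) sending \(X\times Y\) to \(X'\times Y'\), and for every \((\boldsymbol{x},\boldsymbol{y})\in X'\times Y'\) we have \(\chi(\boldsymbol{x},\boldsymbol{y})=c(\boldsymbol{y})(\boldsymbol{x})=\varphi(\boldsymbol{x})\), a constant. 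Hence \(X'\times Y'\) is a monochromatic congruent copy of \(X\times Y\), so \(X\times Y\) is Ramsey in \(\mathbb{E}^N\).

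The only genuinely nontrivial ingredient is the compactness reduction to a finite witness \(S_X\); once that is in hand, the two-layer fingerprint-and-assemble step is routine bookkeeping. I would therefore structure the write-up around a clean statement of the compactness reduction (or, equivalently, an initial reformulation of the Ramsey hypothesis in its finite form), after which the main technical content is just the observation that an \(r\)-coloring of a product space induces an \(r^{\abs{S_X}}\)-coloring of the second factor.
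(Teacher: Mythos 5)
Your argument is correct and is essentially the standard proof of the product theorem from the cited Erd\H{o}s--Graham--Montgomery--Rothschild--Spencer--Straus paper: a compactness reduction to a finite witness \(S_X\), followed by the fingerprint coloring of the second factor with \(r^{\abs{S_X}}\) colors. The paper itself gives no proof of this statement (it is quoted as a classical result), so there is nothing further to compare; your write-up, including the correct identification of the compactness step as the only nontrivial ingredient and the observation that a product of isometries is an isometry of \(\mathbb{E}^{n_1+n_2}\), is complete.
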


Next we provide the formal proof of~\cref{thm:rectangle}.

\begin{proof}[Proof of~\cref{thm:rectangle}]
Let \(T\) be a rectangle with side length \(x\) and \(y\), where \(x> y\). Let \(m := \left\lceil \frac{x}{y} \right\rceil\). By definitions, both \(S_7(y)\) and \(B_2(y, x)\) are simplices, and hence are Ramsey by Theorem~\ref{thm:RodlFrankl}. By Theorem~\ref{EGM}, their Cartesian product \(S_7(y) \times B_2(y, x)\) is also Ramsey. Therefore, for any integer \(q\), there exists a dimension \(n_0 = n_0(x, y, q)\) such that
\[
\mathbb{E}^{n_0} \overset{q}{\rightarrow} S_7(y) \times B_2(y, x).
\]
Set \(q = (m+1)\binom{3m+1}{2} + 3m + 1\), and fix such an \(n_0\). Define \(n := n_0 + 3m + 2\). Note that \(n\) depends only on the rectangle \(T\), and is independent of the number of colors \(r\). Let \(\chi \colon \mathbb{E}^{n} \to [r]\) be an arbitrary \(r\)-coloring. Observe that when \(r \le 3\), the statement follows trivially since every rectangle is Ramsey~\cite{1973JCTA}. Therefore, we may assume \(r \ge 4\) throughout the proof. We begin with the following key auxiliary lemma.

\begin{lemma}\label{lemma:aux1}
Let \(T\) be a rectangle with side length \(a\) and \(b\). Let \( r \ge 4 \) and \(s\ge \max \{2,\ceil{\frac{a}{b}}\}\) be integers. Then
\[
S_{3s+1}(a) \times B_{s}(a, b) \overset{r}{\rightarrow} (\ell_{a}; T)_{\textup{GR}},
\]
\end{lemma}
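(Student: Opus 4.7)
The plan is to argue by contradiction: suppose some $r$-coloring $\chi$ of $Y:=S_{3s+1}(a)\times B_s(a,b)$ contains neither a monochromatic segment of length $a$ nor a rainbow copy of $T$. Write $v_1,\dots,v_{s+1}$ for the points of $B_s(a,b)$ and set $f_i(u):=\chi(u,v_i)$ for $u\in S_{3s+1}(a)$, $1\le i\le s+1$. I would first read off two constraints from pairs in $Y$ at Euclidean distance $a$: the $3s+1$ points in each fiber $\{(u,v_i):u\in S_{3s+1}(a)\}$ are pairwise at distance $a$ in the product metric, so each $f_i$ must be injective; and since $\|v_1-v_{s+1}\|=a$, we get $f_1(u)\ne f_{s+1}(u)$ for every $u$. (If $r<3s+1$ no such injection exists, so some fiber already yields a monochromatic $\ell_a$ and we are done.)

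Next I would exploit the forbidden rainbow rectangles. For any distinct $u,u'\in S_{3s+1}(a)$ and any $1\le i\le s$, a quick product-distance calculation shows that $(u,v_i),(u,v_{i+1}),(u',v_i),(u',v_{i+1})$ form a rectangle congruent to $T$. Hence the four colors $f_i(u),f_{i+1}(u),f_i(u'),f_{i+1}(u')$ cannot all be distinct; together with the injectivity of $f_i$ and $f_{i+1}$, which already excludes $f_i(u)=f_i(u')$ and $f_{i+1}(u)=f_{i+1}(u')$, at least one of the four equalities $f_i(u)=f_{i+1}(u)$, $f_i(u')=f_{i+1}(u')$, $f_i(u)=f_{i+1}(u')$, $f_i(u')=f_{i+1}(u)$ must hold.

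The crux is the bound $|B_i|\le 3$ for the sets $B_i:=\{u:f_i(u)\ne f_{i+1}(u)\}$. For distinct $u,u'\in B_i$ the first two equalities above are impossible, so one of the last two is forced. I would encode each $u\in B_i$ by the directed edge $e_u:=f_i(u)\to f_{i+1}(u)$ in an auxiliary digraph on the color set; injectivity of $f_i$ and $f_{i+1}$ makes the tails and the heads of these edges pairwise distinct, while the remaining constraint says that for any two such edges the head of one equals the tail of the other. A short case analysis then shows that three such edges can coexist only by forming a directed $3$-cycle, and no fourth edge can be attached without violating distinctness of heads or tails; hence $|B_i|\le 3$.

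The proof then closes by pigeonhole: $\sum_{i=1}^{s}|B_i|\le 3s<3s+1=|S_{3s+1}(a)|$, so some simplex vertex $u$ lies outside every $B_i$, and for this $u$ one has $f_1(u)=f_2(u)=\cdots=f_{s+1}(u)$, contradicting $f_1(u)\ne f_{s+1}(u)$. I expect the main obstacle to be precisely the combinatorial estimate $|B_i|\le 3$: the size $3s+1$ of the simplex is calibrated exactly to this bound, and the directed-edge argument must combine distinctness of both tails and heads with the pairwise head-meets-tail condition to preclude $|B_i|=4$.
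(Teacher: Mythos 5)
Your proposal is correct and is essentially the paper's own argument in contrapositive form: the paper also uses that each fiber $\{(\boldsymbol{u},\boldsymbol{v}_i)\}$ is rainbow, that $f_1(\boldsymbol{u})\ne f_{s+1}(\boldsymbol{u})$ forces a ``jump'' index for every $\boldsymbol{u}$, and then pigeonholes $3s+1$ points into $s$ indices to get four points sharing a jump index, at which point the same tail/head distinctness counting (phrased there as ``at most one of the four can be red, at most one can be blue'') produces the rainbow rectangle. Your digraph encoding of the bound $|B_i|\le 3$ is a clean repackaging of that step, and the in/out-degree count does verify it, but the decomposition, the calibration of $3s+1$ against $3s$, and the key combinatorial fact are identical to the paper's.
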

\begin{proof}[Proof of Lemma~\ref{lemma:aux1}]
Let \( \tau \colon S_{3s+1}(a) \times B_s(a, b) \to [r] \) be an arbitrary \(r\)-coloring of the product configuration. Suppose that there is no monochromatic copy of \(\ell_a\). We will show the existence of a rainbow copy of \(T\).

Recall that \( B_s(a, b) \) consists of \(s + 1\) points \(\boldsymbol{v}_1, \boldsymbol{v}_2, \ldots, \boldsymbol{v}_{s+1}\) lying in a common plane, such that
    \[
        \|\boldsymbol{v}_i - \boldsymbol{v}_{i+1}\| = b \quad \text{for all } 1 \le i \le s, \quad \text{and} \quad \|\boldsymbol{v}_1 - \boldsymbol{v}_{s+1}\| = a.
    \]
Fix any point \(\boldsymbol{x}_k \in S_{3s+1}(x)\), and consider the sequence
\[
(\boldsymbol{x}_k, \boldsymbol{v}_1),\ (\boldsymbol{x}_k, \boldsymbol{v}_2),\ \ldots,\ (\boldsymbol{x}_k, \boldsymbol{v}_{s+1})\in S_{3s+1}(a)\times B_{s}(a,b).
\]
By definitions, \(\|(\boldsymbol{x}_k, \boldsymbol{v}_1) - (\boldsymbol{x}_k, \boldsymbol{v}_{s+1})\| = a\). Since \(\tau\) avoids monochromatic copies of \(\ell_a\), these two points must have different colors:
\[
\tau(\boldsymbol{x}_k, \boldsymbol{v}_1) \ne \tau(\boldsymbol{x}_k, \boldsymbol{v}_{s+1}).
\]
Thus, there exists some index \(i \in \{1, \dots, s\}\) such that
\[
\tau(\boldsymbol{x}_k, \boldsymbol{v}_i) \ne \tau(\boldsymbol{x}_k, \boldsymbol{v}_{i+1}).
\]

Now, vary \(\boldsymbol{x}_k\) over all \(3s+1\) points in \(S_{3s+1}(a)\). For each \(k\in [3s+1]\), associate to it an index \(i_k\) where \(\tau(\boldsymbol{x}_k, \boldsymbol{v}_{i_k}) \ne \tau(\boldsymbol{x}_k, \boldsymbol{v}_{i_k+1})\). By the pigeonhole principle, there exists some fixed \(i\) such that
\[
\tau(\boldsymbol{x}_k, \boldsymbol{v}_i) \ne \tau(\boldsymbol{x}_k, \boldsymbol{v}_{i+1})
\]
holds for at least \(\ceil{\frac{3s+1}{s}}=4\) distinct values of \(k\). Without loss of generality, assume this holds for \(k = 1, 2, 3, 4\). We now examine the colors of the points \((\boldsymbol{x}_k, \boldsymbol{v}_i)\) and \((\boldsymbol{x}_k, \boldsymbol{v}_{i+1})\) for \(k = 1, 2, 3, 4\). Without loss of generality, we may assume
\[
\tau(\boldsymbol{x}_1, \boldsymbol{v}_i) = \text{red}, \quad \tau(\boldsymbol{x}_1, \boldsymbol{v}_{i+1}) = \text{blue}.
\]
For each fixed \(j \in \{i, i+1\}\), the set \(\{(\boldsymbol{x}_k, \boldsymbol{v}_j) : k = 1, 2, 3, 4\}\) forms a regular simplex with all pairwise distances equal to \(a\). Since \(\tau\) avoids monochromatic copies of \(\ell_a\), these four points must all receive distinct colors. In particular, the followings hold.
\begin{itemize}
    \item Among \(\tau(\boldsymbol{x}_k, \boldsymbol{v}_{i+1})\) for \(k = 1, 2, 3, 4\), at most one can be colored red.
    \item Among \(\tau(\boldsymbol{x}_k, \boldsymbol{v}_i)\) for \(k = 1, 2, 3, 4\), at most one can be colored blue.
\end{itemize}
Therefore, there exists some \(k \ne 1\) (without loss of generality, assume \(k = 2\)) such that
\[
\tau(\boldsymbol{x}_2, \boldsymbol{v}_i) \ne \text{blue}, \quad \tau(\boldsymbol{x}_2, \boldsymbol{v}_{i+1}) \ne \text{red}.
\]
Thus, the four vertices
\[
(\boldsymbol{x}_1, \boldsymbol{v}_i), \quad (\boldsymbol{x}_1, \boldsymbol{v}_{i+1}), \quad (\boldsymbol{x}_2, \boldsymbol{v}_i), \quad (\boldsymbol{x}_2, \boldsymbol{v}_{i+1})
\]
receive four distinct colors under \(\tau\). These form a rainbow copy of the rectangle \(T\) with side lengths \(a\) and \(b\), a contradiction. This completes the proof of~\cref{lemma:aux1}.
\end{proof}

 For each point \(\boldsymbol{u} \in \mathbb{E}^{n_0}\), consider the product configuration
\[
(\boldsymbol{u}, S_{3m+1}(x) \times B_m(x, y)) := \{(\boldsymbol{u}, \boldsymbol{v}) : \boldsymbol{v} \in S_{3m+1}(x) \times B_m(x, y)\} \subseteq \mathbb{E}^{n_0 + 3m + 2}.
\]
By Lemma~\ref{lemma:aux1}, under the coloring \(\chi\), each such set either contains a rainbow copy of \(T\), or a monochromatic copy of \(\ell_x\). If a rainbow copy of \(T\) occurs for some \(\boldsymbol{u}\), we are done. Thus, we may assume that for every \(\boldsymbol{u} \in \mathbb{E}^{n_0}\), no rainbow copy of \(T\) exists. Then Lemma~\ref{lemma:aux1} ensures that each associated product configuration must contain a monochromatic copy of \(\ell_x\). Since \(S_{3m+1}(x) \times B_m(x, y)\) is finite, there are only finitely many possible positions for such \(\ell_x\)'s. More precisely, each copy of \(\ell_x\) arises in one of the following two forms:
\begin{itemize}
    \item A line between two distinct points in \(S_{3m+1}(x)\), paired with a fixed point in \(B_m(x,y)\). There are \(\binom{3m+1}{2}\) choices of lines in \(S_{3m+1}(x)\), and for each such line, \(m+1\) possible choices of a point in \(B_m(x,y)\), yielding a total of \((m+1) \cdot \binom{3m+1}{2} \) copies.
    
    \item There exists one line in \(B_m(x,y)\) of length exactly \(x\), namely the line between \(v_1\) and \(v_{m+1}\), and pair it with a fixed point in \(S_{3m+1}(x)\). Since there are \(3m+1\) choices of points in \(S_{3m+1}(x)\), this contributes \(3m+1\) copies.
\end{itemize}
Therefore, the number of distinct copies of \(\ell_x\) in \(S_{3m+1}(x) \times B_m(x, y)\) is bounded by
\[
q= (m+1)\binom{3m+1}{2} + 3m+1.
\]
To formalize the selection of these monochromatic \(\ell_x\)'s, we define a labeling function
\[
\lambda : \{ \ell_x \subseteq S_{3m+1}(x) \times B_m(x,y) \} \to [q],
\]
which assigns a unique label to each possible position of \(\ell_x\).

We define an auxiliary coloring \(\gamma : \mathbb{E}^{n_0} \to [q]\) by the rule:
for each \(\boldsymbol{u} \in \mathbb{E}^{n_0}\), select one monochromatic copy of \(\ell_x\), denoted by \(\ell(\boldsymbol{u})\), within its associated product configuration under \(\chi\), and set
\[
\gamma(\boldsymbol{u}) := \lambda( \ell(\boldsymbol{u}) ).
\]
That is, \(\gamma(\boldsymbol{u})\) records the position label of the chosen monochromatic \(\ell_x\). By definitions of \(n_{0}\) and \(q\),
\[
\mathbb{E}^{n_0} \overset{q}{\rightarrow} S_7(y) \times B_2(y, x).
\]
Thus, there exists a monochromatic copy of \(S_7(y) \times B_2(y, x)\) under \(\gamma\). Fix this monochromatic copy and denote the set of its points as \(V \subseteq \mathbb{E}^{n_0}\).

By the definition of \(\gamma\), every point \(\boldsymbol{u} \in V\) is associated with the same monochromatic copy of \(\ell_x\), whose endpoints \(\boldsymbol{v}_1, \boldsymbol{v}_2\) lie in \(S_{3m+1}(x) \times B_m(x,y)\). That is, for all \(\boldsymbol{u} \in V\), the pair \(\{ (\boldsymbol{u}, \boldsymbol{v}_1), (\boldsymbol{u}, \boldsymbol{v}_2) \}\) forms a monochromatic copy of \(\ell_x\) under \(\chi\). Consider now the set of points
\[
V \times \{ \boldsymbol{v}_1, \boldsymbol{v}_2 \} \subseteq \mathbb{E}^{n_0 + 3m + 2}.
\]
If this set contains a rainbow copy of \(T\) under \( \chi \), we are done. Otherwise, focus on the subset
\[
(V, \boldsymbol{v}_1) := \{ (\boldsymbol{u}, \boldsymbol{v}_1) : \boldsymbol{u} \in V \}.
\]
Since \(V\) is isomorphic to \(S_7(y) \times B_2(y, x)\), we can apply Lemma~\ref{lemma:aux1} again with parameter \(s = 2\). Here, we interpret the rectangle \(T\) as being rotated by 90 degrees, effectively treating the side of length \(y\) as the dominant side in the product structure. This flexible application of Lemma~\ref{lemma:aux1} allows us to leverage the same combinatorial argument with the roles of \(x\) and \(y\) interchanged. More precisely, if a rainbow copy of \(T\) under \(\chi\) is found, we are done; otherwise, Lemma~\ref{lemma:aux1} guarantees the existence of a monochromatic copy of \(\ell_y\), say between the points \((\boldsymbol{u}_1, \boldsymbol{v}_1)\) and \((\boldsymbol{u}_2, \boldsymbol{v}_1)\).

Finally, we make the following observations under the coloring function \(\chi\):
\begin{itemize}
    \item The points \((\boldsymbol{u}_1, \boldsymbol{v}_1)\) and \((\boldsymbol{u}_2, \boldsymbol{v}_1)\) share the same color, as they form a monochromatic copy of \(\ell_y\) constructed in the previous step. 
    \item The points \((\boldsymbol{u}_1, \boldsymbol{v}_1)\) and \((\boldsymbol{u}_1, \boldsymbol{v}_2)\) also share the same color, since the line with endpoints \(\boldsymbol{v}_1, \boldsymbol{v}_2\) was fixed as a monochromatic \(\ell_x\) across all associated product configurations. Likewise, the points \((\boldsymbol{u}_2, \boldsymbol{v}_1)\) and \((\boldsymbol{u}_2, \boldsymbol{v}_2)\) are colored identically.
\end{itemize}
Therefore, the four points \( (\boldsymbol{u}_1, \boldsymbol{v}_1),  (\boldsymbol{u}_1, \boldsymbol{v}_2), (\boldsymbol{u}_2, \boldsymbol{v}_1), (\boldsymbol{u}_2, \boldsymbol{v}_2)\) form a monochromatic rectangle congruent to \(T\) under the original coloring \(\chi\). This completes the proof.
\end{proof}

\section{Concluding remarks}
In this paper, we have completely resolved the canonical Ramsey property for rectangles, thereby answering the open problem posed by Geh\'{e}r, Sagdeev, and T\'{o}th~\cite{2024Cano}. Our proof establishes that for any rectangle \(T\), there exists a dimension \(n_0 = n_0(T)\) such that every \(r\)-coloring of \(\mathbb{E}^{n_0}\) contains either a monochromatic or a rainbow congruent copy of \(T\), with \(n_0\) depending solely on the parameters of \(T\) and independent of the number of colors \(r\). While the bound on \(n_0\) obtained through our construction is admittedly large, optimizing this dimension remains an intriguing problem. Nevertheless, \(n_0\) cannot, in general, be reduced to 2; see~\cite[Theorem~1.3]{2022arxivEGR} for a precise obstruction.

A natural and challenging direction for future research is to determine whether this phenomenon extends to all \emph{rectangular sets}, finite configurations that are subsets of the vertex set of a rectangular box in \(\mathbb{E}^n\). Proving the canonical Ramsey property for these sets would constitute a substantial leap beyond the current case of full rectangles. A more precise problem is to establish whether the canonical Ramsey property holds for all parallelograms in the plane. A positive resolution would, in particular, imply a complete solution for arbitrary triangles. The general triangle case remains one of the most prominent open problems in Euclidean Gallai-Ramsey theory~\cite{2024Cano}. Partial progress has been made: Cheng and Xu~\cite{2025DCGChengXu} proved the canonical Ramsey property for all acute triangles, right triangles, and certain obtuse triangles whose obtuse angle is not too large. Extending these results to arbitrary triangles will likely require fundamentally new structural insights and techniques beyond the current framework.

\bibliographystyle{abbrv}
\bibliography{gallairamsey}

\begin{thebibliography}{10}

\bibitem{2020SIDMABalogh}
J.~Balogh and L.~Li.
\newblock The typical structure of {G}allai colorings and their extremal graphs.
\newblock {\em SIAM J. Discrete Math.}, 33(4):2416--2443, 2019.

\bibitem{2025DCGChengXu}
X.~Cheng and Z.~Xu.
\newblock Euclidean {G}allai-{R}amsey for various configurations.
\newblock {\em Discrete Comput. Geom.}, 73(4):1037--1052, 2025.

\bibitem{2019DCGCONLON}
D.~Conlon and J.~Fox.
\newblock Lines in {E}uclidean {R}amsey theory.
\newblock {\em Discrete Comput. Geom.}, 61(1):218--225, 2019.

\bibitem{conlon2022more}
D.~Conlon and Y.-H. Wu.
\newblock More on lines in {E}uclidean {R}amsey theory.
\newblock {\em C. R. Math. Acad. Sci. Paris}, 361:897--901, 2023.

\bibitem{2024Yip}
G.~Currier, K.~Moore, and C.~H. Yip.
\newblock Any two-coloring of the plane contains monochromatic 3-term arithmetic progressions.
\newblock {\em Combinatorica}, 44(6):1367--1380, 2024.

\bibitem{2026JCTA}
G.~Currier, K.~Moore, and C.~H. Yip.
\newblock Avoiding short progressions in euclidean ramsey theory.
\newblock {\em Journal of Combinatorial Theory, Series A}, 217:106080, 2026.

\bibitem{1973JCTA}
P.~Erd\H{o}s, R.~L. Graham, P.~Montgomery, B.~L. Rothschild, J.~Spencer, and E.~G. Straus.
\newblock Euclidean {R}amsey theorems. {I}.
\newblock {\em J. Combinatorial Theory Ser. A}, 14:341--363, 1973.

\bibitem{1990JAMS}
P.~Frankl and V.~R\"{o}dl.
\newblock A partition property of simplices in {E}uclidean space.
\newblock {\em J. Amer. Math. Soc.}, 3(1):1--7, 1990.

\bibitem{2004FranklRodl}
P.~Frankl and V.~R\"{o}dl.
\newblock Strong {R}amsey properties of simplices.
\newblock {\em Israel J. Math.}, 139:215--236, 2004.

\bibitem{2025EUJC}
J.~F{\"u}hrer and G.~T{\'o}th.
\newblock Progressions in {Euclidean} {Ramsey} theory.
\newblock {\em Eur. J. Comb.}, 125:10, 2025.
\newblock 104105.

\bibitem{1967Gallai}
T.~Gallai.
\newblock Transitiv orientierbare {G}raphen.
\newblock {\em Acta Math. Acad. Sci. Hungar.}, 18:25--66, 1967.

\bibitem{2024Cano}
P.~Geh\'{e}r, A.~Sagdeev, and G.~T\'{o}th.
\newblock Canonical theorems in geometric {R}amsey theory.
\newblock {\em arXiv preprint arXiv:2404.11454}, Comb. Theory, to appear.

\bibitem{1980GrahamJCTA}
R.~L. Graham.
\newblock On partitions of {${\bf E}^{n}$}.
\newblock {\em J. Combin. Theory Ser. A}, 28(1):89--97, 1980.

\bibitem{1997HANDBOOK}
R.~L. Graham.
\newblock Euclidean {R}amsey theory.
\newblock In {\em Handbook of discrete and computational geometry}, CRC Press Ser. Discrete Math. Appl., pages 153--166. CRC, Boca Raton, FL, 1997.

\bibitem{2010JGTGyarfas}
A.~Gy\'{a}rf\'{a}s, G.~N. S\'{a}rk\"{o}zy, A.~Seb\H{o}, and S.~Selkow.
\newblock Ramsey-type results for {G}allai colorings.
\newblock {\em J. Graph Theory}, 64(3):233--243, 2010.

\bibitem{2009Combinatorica}
V.~Jel\'{\i}nek, J.~Kyn\v{c}l, R.~Stola\v{r}, and T.~Valla.
\newblock Monochromatic triangles in two-colored plane.
\newblock {\em Combinatorica}, 29(6):699--718, 2009.

\bibitem{2020JGTLiu}
H.~Liu, C.~Magnant, A.~Saito, I.~Schiermeyer, and Y.~Shi.
\newblock Gallai-{R}amsey number for {$K_4$}.
\newblock {\em J. Graph Theory}, 94(2):192--205, 2020.

\bibitem{2022arxivEGR}
Y.~Mao, K.~Ozeki, and W.~Zhao.
\newblock Euclidean {G}allai-{R}amsey theory.
\newblock {\em arXiv preprint}, arXiv: 2209.13247, 2022.

\bibitem{SHADER1976385}
L.~E. Shader.
\newblock All right triangles are ramsey in {$E^2$}!
\newblock {\em Journal of Combinatorial Theory, Series A}, 20(3):385--389, 1976.

\end{thebibliography}
\end{document}